\newtheorem{thm}{Theorem}[section]
\newtheorem{exm}[thm]{Example}
\newtheorem{lem}[thm]{Lemma}
\newtheorem{definition}[thm]{Definition}
\newtheorem{cor}[thm]{Corollary}
\newcommand{\Cov}{\texttt{Cov}}
\newcommand{\abs}[1]{\left\vert#1\right\vert}
\newcommand{\lfix}[1]{\lfloor#1\rfloor}
\newcommand{\seq}[1]{\left<#1\right>}
\newcommand{\set}[1]{\left\{#1\right\}}
\newcommand{\Sym}{\texttt{Sym}}
\newcommand{\tr}{\texttt{Tr}}
\newcommand{\var}{\texttt{Var}}
\newcommand{\vecc}{\texttt{vec}}
\newcommand{\al}{\alpha}
\newcommand{\da}{\delta}
\newcommand{\ga}{\gamma}
\newcommand{\Ga}{\Gamma}
\newcommand{\la}{\lambda}
\newcommand{\eps}{\epsilon}
\newcommand{\sig}{\sigma}
\newcommand{\Sig}{\Sigma}
\newcommand{\ifff}{\Leftrightarrow}
\newcommand{\A}{\mathcal{A}}
\newcommand{\B}{\mathcal{B}}
\newcommand{\C}{\mathcal{C}}
\newcommand{\cI}{\mathcal{I}}
\newcommand{\cH}{\mathcal{H}}
\newcommand{\st}{\mathcal{S\!T}}
\newcommand{\T}{\mathcal{T}}
\newcommand{\bft}{\textbf{t}}
\newcommand{\bu}{\textbf{u}}
\newcommand{\bv}{\textbf{v}}
\newcommand{\bx}{\textbf{x}}
\newcommand{\by}{\textbf{y}}
\def\R{\mathbb R}
\newcommand{\mnrt}{$m$th order $n$-dimensional real tensor }
\newcommand{\mnrts}{$m$th order $n$-dimensional real tensors }
\newcommand{\mnst}{$m$th order $n$-dimensional symmetric tensor }
\newcommand{\mnsts}{$m$th order $n$-dimensional symmetric tensors }
\newcommand{\beq}{\begin{equation}}
\newcommand{\eeq}{\end{equation}}
\newcommand{\bey}{\begin{eqnarray}}
\newcommand{\eey}{\end{eqnarray}}
\newcommand{\beyy}{\begin{eqnarray*}}
\newcommand{\eeyy}{\end{eqnarray*}}
\title{High order tensor moments of random vectors}
\author{Yan Feng,  Shan Song, Changqing Xu\thanks{Corresponding author.
Email: cqxurichard@mail.usts.edu.cn}
\footnote{School of Mathematics, Suzhou University of Science and Technology, Suzhou, China.
}}
      \def\@setcopyright{}
      \def\serieslogo@{}
 \date{\today}
\begin{document}
\maketitle

\begin{abstract}
A random vector $\bx\in \R^n$ is a vector whose coordinates are all random variables. A random vector is called a Gaussian vector if it follows Gaussian distribution.
These terminology can also be extended to a random (Gaussian) matrix and  random (Gaussian) tensor.  The classical form of an $k$-order moment  (for any positive 
integer $k$) of a random vector $\bx\in \R^n$ is usually expressed in a matrix form of size $n\times n^{k-1}$ generated from the $k$th derivative of the characteristic 
function or the moment generating function of $\bx$ , and the expression of an $k$-order moment is very complicate even for a standard normal distributed vector. With 
the tensor form, we can simplify all the expressions related to high order moments. The main purpose of this paper is to introduce the high order moments of a random 
vector in tensor forms and the high order moments of a standard normal distributed vector.  Finally we present an expression of high order moments of  a random vector 
that follows a Gaussian distribution.       
\end{abstract}

\noindent \textbf{keywords:} \  Tensor;  random matrix; high order moment; Gaussian distribution; . \\
\noindent \textbf {AMS Subject Classification}: \   53A45, 15A69.  \\


\section{Introduction}
\setcounter{equation}{0}

Higher order moments are important in statistics. The concept of covariance matrix gives rise to that of co-skewness and co-kurtosis when it is extended to the higher order
 moments, say, the third and fourth moments like skewness and kurtosis. This follows from the generalization of the concept of mean and variance to moments and central 
 moments. Higher-order moments of a normal distribution can be used to derive the recursive relationship of Hermite polynomials\cite{Tracy1993}. They are also widely used 
in the insurance industry\cite{ZT2007},color transmission\cite{KR2005}, fault diagnosis\cite{ZHW2009}, large reflector antenna simulation\cite{WLD1988} and other aspects 
also play an important role. The higher order moments are also useful in risk management.  An example would be when the fund performance of four different fund managers 
are analyzed separately and they are then combined together so that in the end only 2 sets of results are compared.  In both cases the moments i.e. the mean, standard deviation, 
skewness and kurtosis for each manager remains the same.\par
\indent  The covariance, i.e., the second order centralized moment of a random variable, determines the holistic divergence from its location (centriod) in one dimension.
The shape and the other features of the distributions of a multidimensional random vector are not so obvious and hard to describe and illustrated by traditional approach.
Note that the variance of $u$ is $D[u]=E[(u-E(u))^2] = E[u^2]-(E[u])^2 =m_2 -m_1^2$.  In statistical analyses, the fundamental tasks include the characterization of the 
location and variability of the distribution of a data set or a population. Further characterization of the data includes the skewness and kurtosis, which involves the computation 
of the third and fourth order moments respectively. The skewness is a measurement of symmetry (or lack of symmetry) of the data distribution.  A data set is said to be
symmetrically distributed if it looks the same to the left and right of the center point.  Kurtosis is a measure of whether the data are heavy-tailed or light-tailed relative to a normal
distribution, that is, data sets with high kurtosis tend to have heavy tails, or outliers, and these with low kurtosis tend to have light tails, or lack of outliers. \par
\indent There are many ways to express the moments, one of the commonly used approach is to use derivatives to the characteristic function or the moment generating
function\cite{Tracy1993, Bjorn1988}. For the standardized 2-dimensional normal distribution, Kendall and Stuart (1963) gave the recurrence relation of the second order 
moment\cite{Kendall1963}. Johnson (2000) and others have given analytical formulas for the same problem. Holmquist (1988) proved the general form of higher-order
moments\cite{JKB2000} and extended the result to include the derivation of normal distribution quadratic higher moments\cite{Bjorn1996}. The problem of moments and
cumulants of normal random matrices is considered by  Ghazal and Neudecker\cite{GN2000}. Using the Kronecker product, a simple formula for the special case of the 
second and fourth moments of the random matrix is derived\cite{GN2000}.\par
\indent In this paper, we mainly introduce the higher-order tensor moments, present some tensor espressions of the higher order moments, and investigate their properties.\par

\indent Recall that a tensor $\A$ is a multi-way array which can be regarded as a hypermatrix. An $m$-order tensor $\A$ can be of size $I_1\times I_2\times \ldots \times I_m$. 
$\A$ is called a \mnrt if $n:= I_1=I_2=\ldots=I_m$.  The set of all \mnrts is denoted as $\T_{m,n}$. For any positive integers $m,n>1$, we usually denote 
$[n]:=\set{1,2,\ldots,n}, [n]_0:=\set{0,1,2,\ldots,n}$ ,  and
\[  S(m,n) = \set{(i_1,i_2,\ldots, i_m): i_k\in [n] ,\forall k\in [m] } \]
and
\[  S(k; m,n) = \set{\sig:=(i_1,i_2,\ldots, i_m)\in S(m,n) : i_1+i_2+\ldots+i_m=m+k } \]
where $k\in [N]_0$ with $N=m(n-1)$.  For any $\tau\in S(m,n)$, it is easy to see that $\tau\in S(0;m,n)$ if and only if  $\tau=(1,1,\ldots, 1)$, the smallest element in set $S(m,n)$ according to the lexical order,
and $\tau\in S(N; m,n)$ if and only if  $\tau=(n,n,\ldots, n)$, the largest element in $S(m,n)$.  \\
\indent An \mnrt  $\A$ with size $n\times n\times\ldots\times n$ is an $m$-array whose entries are indexed by indices $(i_1,i_2,\ldots, i_m)\in S(m, n)$. $\A$’s element $A_{i_1i_2\ldots i_m}$ is also denoted
by $A_{\sig}$ where $\sig=(i_1,i_2,\ldots, i_m)$.  We denote the set of all \mnrts by $T_{m;n}$.  A tensor $\A=(A_{\sig})\in T_{m;n}$ is called a \emph{symmetric tensor} if 
each entry $A_{i_1,i_2,...,i_m}$ is invariant under any permutation of its indices, that is,
\[ A_{\sig}=A_{\tau(\sig)}  \forall  \tau\in \Sym_m, \forall \sig\in S (m,n). \]
where $\Sym_m$ is the set of all permutations on $[m]$.  We denote the set of all \mnsts by $\st_{m;n}$.\par  
\indent In the next section, we will introduce some notations related to the multiplications of tensors, which will be used to characterize higher order moments. Also 
we will define the tensor form of high order derivatives (HOD) of a multivariate function. Some interesting results of 4-order tensors will also be addressed in order to 
prepare for the description of the covariance tensor of a random matrix. \par

\vskip 10pt

\section{The multiplications of tensors and the 4-order tensors}  
\setcounter{equation}{0}

An \mnrt $\A \in \T_{m;n}$ can be associated with an $m$-order $n$-variate homogeneous polynomial in form
\[
f_{\A}(\bx)=\A\bx^m:=\sum_{i_1,i_2,...,i_m}A_{i_1,i_2,...,i_m}x_{i_1}x_{i_2}\ldots x_{i_m}
\]
A symmetric tensor $\A\in \st_{m;n}$ is called \emph{positive semidefinite} or simply \emph{PSD}  if $f_{\A}(\bx) \geq 0$ for all $\bx\in \R^n$ and is called 
\emph{positive definite} (\emph{PD})  if  $f_{\A}(\bx) > 0$ for all nonzero $\bx\in \R^n$.  Let $\A,\B$ be any tensors of order $p$ and $q$ respectively.  Now we 
denote $[p+q]:=\set{1,2,\ldots, p+q}$ and let $[p+q]=S\cup T$ be a proper partition of set $[p+q]$ where the carnalities of  $S$ and $T$ are respectively 
$p$ and $q$.  For convenience, we write $S=\set{s_1,s_2,\ldots,s_p}$ and $T=\set{t_1, t_2,\ldots,t_q}$, both in increasing order.  Then we denote 
$\C:=\A\times_{T}\B$ for the \emph{outer-product} of $\A$ and $\B$, defined by
\beq\label{eq:outprod}
C_{i_1\ldots i_p i_{p+1}\ldots i_{p+q}} = A_{i_{S}}B_{i_{T}}
\eeq
where $i_{S}:= (i_{s_1},i_{s_2},\ldots,i_{s_p}), i_{T}:=(i_{t_1},i_{t_2},\ldots,i_{t_q})$.  $\C$ is called the outer-product of $\A$ with $\B$ along 
mode-$T$, which is a tensor of order $p+q$. Note that the out-product of $m$ (column) vectors produces a tensor of order $m$. We denote 
$\bx^m:= \overbrace{\bx\times \bx\times \ldots\times \bx}^{m}$ for any $\bx\in \R^n$ for our convenience. Thus $\bx^m$ is a rank-1 \mnst. In the following example, 
we consider the outer-product of two $n\times n$ real matrices.  \par   
\begin{exm}\label{exm2-1}
Let $A,B\in \R^{n\times n}$.  There are six different out-products for $(A, B)$, each product $A\times_{\theta} B$ is a 4-order $n$-dimensional tensor where $\theta$ is any 
2-set of $[4]$, i.e., $\theta\in \set{\set{1,2},\set{1,3},\set{1,4},\set{2,3},\set{2,4},\set{3,4}}$. Furthermore, there are three different products when $B=A$, i.e.,
\[ A\times_{(1,2)} A, \quad  A\times_{(1,3)} A, \quad  A\times_{(1,4)} A  \]
since $ A\times_{\theta} A = A\times_{\theta^c} A$ for any 2-set $\theta\subset [4]$.\par
\indent  Note that generally these three tensors are different. For example, let $A=I_n$, the identity matrix. Then we have
\[ (I_n\times_{(1,2)} I_n)_{i_1i_2i_3i_4} = \da_{i_3i_4}\da_{i_1i_2} \] 
and 
\[ (I_n\times_{(1,3)} I_n)_{i_1i_2i_3i_4} = \da_{i_2i_4}\da_{i_1i_3} \] 
for any index $(i_1,i_2,i_3,i_4)\in S(4,n)$, where $\da_{ij}$ is the Kronecker constant ($\da_{ij}\in \set{0,1}$ and $\da_{ij}=1\ifff j=i$).       
\end{exm}
\indent Sometimes we need to reduce or preserve the order of tensors by multiplication. For this purpose, we introduce the contractive multiplications of tensors, which may be regarded 
as the extension of Einstein multiplications of tensors.  Let $\A\in \T_{p;n}, \B\in \T_{q;n}$ and let $S\subset [p],T\subset [q]$ with $r=\abs{S}=\abs{T}$ ($1\leq r\leq \min(p,q)$). Here 
$\abs{S}$ denotes the cardinality of  a set $S$ 
Denote $m=p+q-2r$. The \emph{Einstein product} of $\A$ with $\B$ along mode-$(S,T)$ as an $m$-order tensor $\C:=\A\times_{(S,T)}\B$ which is defined by 
\[ C_{\theta} = \sum\limits_{\eta_S} A_{\theta_S} B_{\tau_T} \]
where $\theta_S\in S(p,n), \tau_T\in S(q,n)$.  For example, if $\A,\B\in \T_{4;n}$, and $S=\set{3,4}, T=\set{1,2}$. Then we have $\C:=\A\times_{(S,T)}\B\in \T_{4;n}$ whose entries are 
\[  C_{i_1i_2i_3i_4} = \sum\limits_{j_1, j_2} A_{i_1 i_2 j_1 j_2}B_{j_1 j_2 i_3i_4} \]
where the summation is over all $j_1,j_2\in [n]$.  Moreover, if  $\A\in \T_{m;n}, B\in \R^{n\times p}$.  Then an $k$-mode multiplication of $\A$ by $B$ from the right side, denoted 
$\A\times_k B$, is defined by
\beq\label{eq: kmodeprod}
(\A\times_k B)_{i_1\ldots i_{k-1} i_k i_{k+1}\ldots i_n} = \sum_{j =1}^n  A_{i_1,\ldots i_{k-1} j i_{k+1}\ldots i_n}B_{j i_k}
\eeq
Sometimes we briefly denote it by $\A B$ when $k=n$.  Similarly,  the $k$-mode multiplication of $\A$ by $B$ from the left,  denoted $B\times_k \A$, is defined by
\beq\label{eq:MatrxTensor}
(B\times_{k}\A)_{i_1\ldots i_{k-1} i_k i_{k+1}\ldots i_m}=\sum\limits_{j} A_{i_1\ldots i_{k-1} j i_{k+1}\ldots i_m} B_{i_k j}
\eeq
We denote $[B]\A=B\times_1\times_2\times \ldots \times_k \A$ when $\A\in \T_{m;n}, B\in \R^{n\times n}$.\par   

\indent   (\ref{eq: kmodeprod}) conforms to matrix multiplication.  For example, we have 
\beq\label{eq:MwithM} 
A\times_{1}[B]=A^{\top}B, A\times_{2}[B] = AB,  [B]\times_1 A = BA, [B]\times_2 A =BA^{\top} 
\eeq   

The contractive product of an \mnst $\A$ with an $n$-dimensional vector $\bx$ in all modes yields an $m$-degree $n$-variate homogeneous polynomial 
$f(\bx):=\A\bx^m$,  and $\by:=\A\bx^{m-1}$, which is defined as a vector $\by=(y_1,y_2,\ldots,y_n)^{\top}$ with  
\[ y_i = \sum\limits_{i_2,i_3,\ldots,i_m} A_{ii_2i_3\ldots i_m}x_{i_2}x_{i_3}\ldots x_{i_m}, \qquad   i=1,2,\ldots,n\]  
can be used to define the eigenvalues and eigenvectors of a tensor.  
\indent Now we consider the linear space $\T_{4;n}$, the set of all 4-order $n$-dimensional real tensors. Let $\A,\B\in \T_{4;n}$. The product $\C=\A\times\B\in \T_{4;n}$ is 
defined as 
\beq\label{eq;4tprod} 
C_{i_1i_2i_3i_4} =\sum\limits_{j_1, j_2} A_{i_1i_2j_1j_2}B_{j_1j_2i_3i_4} 
\eeq
for any $(i_1,i_2,i_3,i_4)\in S(4,n)$. In this case, we may define the identity tensor $\cI=(\eps_{i_1i_2i_3i_4})\in \T_{4,n}$ as $\eps_{i_1i_2i_3i_4}=\da_{i_1i_3}\da_{i_2i_4}$
where $\da_{ij}$ is the Kronecker constant, i.e., $\da_{ii}=1$ and $\da_{ij}=0$ for all distinct $i,j$. It is easy to see that $\cI = I_n\times_{(1,3)}I_n$. We can also 
show that 
\begin{lem}\label{le: identitytensor01}
For any tensor $\A\in \T_{4,n}$, we have 
\beq\label{eq: identitytensorprop01}
\A\times \cI = \cI\times \A = \A 
\eeq
\end{lem}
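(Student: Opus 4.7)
The statement is essentially a direct unpacking of definitions, so the plan is a short index manipulation rather than a structural argument. I would approach it by computing each side entry-wise using the definition of the $\T_{4;n}$-product in (\ref{eq;4tprod}) and the explicit form $\eps_{i_1 i_2 i_3 i_4} = \da_{i_1 i_3}\da_{i_2 i_4}$ of the identity tensor.

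First, I would expand $(\A\times\cI)_{i_1 i_2 i_3 i_4}$ according to (\ref{eq;4tprod}), obtaining a double sum over $j_1,j_2$ of $A_{i_1 i_2 j_1 j_2}\eps_{j_1 j_2 i_3 i_4}$. Substituting the explicit formula for $\cI$ gives $\sum_{j_1,j_2} A_{i_1 i_2 j_1 j_2}\da_{j_1 i_3}\da_{j_2 i_4}$; the Kronecker deltas collapse both summations, leaving exactly $A_{i_1 i_2 i_3 i_4}$. This proves $\A\times\cI=\A$.

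For the other identity, I would compute $(\cI\times\A)_{i_1 i_2 i_3 i_4} = \sum_{j_1,j_2} \eps_{i_1 i_2 j_1 j_2} A_{j_1 j_2 i_3 i_4} = \sum_{j_1,j_2} \da_{i_1 j_1}\da_{i_2 j_2} A_{j_1 j_2 i_3 i_4}$, and again collapse the deltas to obtain $A_{i_1 i_2 i_3 i_4}$. Since the indices $(i_1,i_2,i_3,i_4)$ were arbitrary in $S(4,n)$, both equalities hold as tensor identities.

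There is no real obstacle: the only subtlety is bookkeeping the positions of indices, since $\cI$ is defined via the $(1,3)$-outer product of $I_n$ with itself (so the two $\da$'s pair $(j_1,i_3)$ and $(j_2,i_4)$ on one side, and $(i_1,j_1)$ and $(i_2,j_2)$ on the other). Once this pairing is written down correctly, the sums telescope immediately and no further argument is needed.
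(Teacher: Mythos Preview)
Your proposal is correct and matches the paper's own proof essentially line for line: both compute $(\A\times\cI)_{i_1i_2i_3i_4}$ via (\ref{eq;4tprod}), substitute $\eps_{j_1j_2i_3i_4}=\da_{j_1i_3}\da_{j_2i_4}$, and collapse the Kronecker deltas. The only difference is that you write out the $\cI\times\A$ case explicitly, whereas the paper dismisses it with ``similarly.''
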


\begin{proof}
We show the equality $\A\times \cI =\A$. For any given index $(i_1,i_2,i_3,i_4)\in S(4,n)$, we have 
\beyy  
    (\A\times \cI)_{i_1i_2i_3i_4} &=& \sum\limits_{j_1,j_2} A_{i_1i_2j_1j_2}\cI_{j_1j_2i_3i_4} \\
                                                 &=& \sum\limits_{j_1,j_2} A_{i_1i_2j_1j_2}\da_{j_1i_3}\da_{j_2i_4}\\
                                                 &=& A_{i_1i_2i_3i_4}
\eeyy
Thus we have  $\A\times \cI =\A$. Similarly we can also prove the equality $\cI\times \A =\A$.                                                                    
\end{proof}

\indent We are now ready to define the tensor form of high order derivatives (HOD) of a multivariate function. Let $f(\bx)=f(x_1,x_2,\ldots,x_n)$ be the function
defined on $\R^n$ which maps $\R^n$ to $\R$. Then the first derivative of $f$ with respect to $\bx$, also called the gradient of $f$, denoted by
$\frac{df}{d\bx}$, is defined as $\frac{df}{d\bx}:=(\frac{df}{dx_1},\frac{df}{dx_2},\ldots,\frac{df}{dx_n})^{\top}\in \R^n$. The second derivative
of $f$ is defined accordingly by $\frac{d^2 f}{d\bx^2} =\frac{d}{d\bx}(\frac{df}{d\bx})$, which yields the definition
\beq\label{eq:2der}
\frac{d^2 f}{d\bx^2} =(h_{ij}): \  h_{ij} = \frac{d^2 f(\bx)}{dx_i dx_j}
\eeq
$H=(h_{ij})\in \R^{n\times n}$ is called the \emph{Hessian matrix} of $f(\bx)$. Note that sometimes the Hessian of $f$ refers to the determinant of
the Hessian matrix. Here we only concern the Hessian matrix. The higher order derivatives of $f(\bx)$, i.e., $\frac{d^k f}{d\bx^k}$ for $k>2$, is bit
of more complicate traditionally since all $\frac{d^k f}{d\bx^k}$ are defined in matrix form which is achieved by recursive vectorization of the matrix
$\frac{d^(k-1) f}{d\bx^(k-1)}$ before the next derivative, i.e.,
\beq\label{eq:k+1der}
H^{(k+1)}:= \frac{d^{(k+1)} f}{d\bx^{(k+1)}} =  \frac{d}{d\bx} (\vecc(\frac{d^k f}{d\bx^k}) )
\eeq
Thus $\frac{d^k f}{d\bx^k}\in \R^{n\times n^{k-1}}$ for all $k\geq 2$. Note that $H^{(1)}\in \R^n$ and $H=H^{(2)}\in \R^{n\times n}$ is exactly the Hessian of $f$.
This conventional definition of HOD of $f$ ambiguous the meaning of each element when $k>2$. A more natural definition is the following:
\beq\label{eq:kdertensor}
\cH^{(k)}:=(H_{i_1i_2\ldots i_k}), \quad  H_{i_1i_2\ldots i_k} = \frac{d^k f}{dx_{i_1}dx_{i_2}\ldots dx_{i_k}}
\eeq
Thus we call $\cH^{(k)}$ the $k$-order Hessian tensor of $f(\bx)$. Note that $\cH^{(k)}$ is symmetric due to the commutavity of derivatives of $f$.  Hence we 
have $\cH^{(k)}\in \st_{k;n}$. \par 
\indent  Now we consider the derivatives of a matrix variable $Y\in \R^{m\times n}$ with respect to another matrix variable $X\in \R^{m\times n}$
(taking each entry of $Y$ as a function of  the elements of $X$).  Conventionally this is defined as a matrix $H=(H_{ij})\in \R^{mn\times mn}$ with 
$H_{ij}=\frac{\partial y_{i _1 j_1}}{\partial x_{i _2 j_2}}$ where 
\beq\label{eq: divbym}   
i = (j_1-1)m +i _1, \quad   j = (j_2-1)m +i _2 
\eeq  
where $0\leq i_1,i_2<m$ and $1\leq j_1,j_2\leq n$. Note that 
\[ \frac{\partial Y}{\partial X}= \frac{\partial \vecc(Y)}{\partial \vecc(X)^{\top}}  \]
It follows that $\frac{\partial y}{\partial x}=(h_{ij})$ with $h_{ij}=\frac{\partial y_i}{\partial x_j}$ when $x,y$ are both vectors.\par  
\indent The two equations in (\ref{eq: divbym}) are obtained from the division theorem with remainder properties. 
Note that when $i_1=0$ ($i_2=0$), we replace $j_1$($j_2$) by $j_1-1$($j_2-1$) and let $i_1=n $ ($i_2=n$).  The definition provokes some conveniences 
especially when coping with the $k$th derivative of $Y$ w.r.t. $X$, as seen from the above.  Now we introduce the tensor form of the derivatives.  \par 
\begin{definition}\label{def: derivtensor}   
Let $X, Y\in \R^{m\times n}$ where each entry $y_{ij}$ is regarded as a function of  $mn$ variables $\set{x_{ij}}$.  We define the derivative $\frac{\partial Y}{\partial X}$
as the 4-order tensor $\A = (A_{i_1i _2j_1j_2})$ with  
\beq\label{eq: def1dertensor}  
A_{i_1i _2j_1j_2} = \frac{\partial y_{i _1 j_1}}{\partial x_{i _2 j_2}}
\eeq 
which is of size $m\times m\times n\times n$. Now recursively we define the $k$th derivative as 
\[ \frac{\partial^{k} Y}{\partial X^{k}} = \frac{\partial}{\partial X} \frac{\partial^{k-1} Y}{\partial X^{k-1}} \]
If we denote $\A^{(k)}:= \frac{\partial^{k} Y}{\partial X^{k}}$, then $\A^{(k)}$ is an $2(k+1)$-order tensor, with 
\beq\label{eq: kdertensor}  
A_{i_1i _2\ldots i _k i_{k+1} j_1j_2\ldots j _k j_{k+1}} = \frac{\partial^{k} y_{i _1 j_1}}{\partial x_{i _2 j_2}\ldots \partial x_{i _k j_k}\partial x_{i _{k+1} j_{k+1}}}
\eeq  
When both $X,Y$ are reduced to vectors, $\A^{(k)}$ reduces to an  $(k+1)$-order tensor. \par 
\end{definition}
\indent  In the next section, we will use the derivative tensors to present the higher order moments of random vectors and random matrices. \par 
   
\vskip 5pt

\section{ High order tensor moments}
\setcounter{equation}{0}

High order moments can be expressed in the form of tensors which can simplify their expressions. The covariance of a random variable $x$ is 
the second central moment of $x$,i.e., $\var(x)=E[(x-E[x])^2]$, and the covariance matrix of a random vector $\bx\in \R^n$, defined as 
\[ \var(\bx)=E[(\bx-E[\bx])(\bx-E[\bx])'] = E[\bx\bx^{\top}]-E[\bx] E[\bx]^{\top}=m_2 -\mu^2 \]  
can be regarded as a function of $m_1$ and $m_2$.  Furthermore, the covariance matrix of a random matrix $X\in \R^{m\times n}$ is conventionally defined as 
\beq\label{eq:defcov4matrx}
\Cov(X)=E[(\bx-\mu)\times (\bx-\mu)] 
\eeq
(note that $\bx\times \by=\bx\by^{\top}$ for any column vectors $\bx,\by$) where $\bx=\vecc(X), \mu=\vecc(E[X])=E[\vecc(X)]\in \R^{mn}$. Thus $\Cov(X)$ is 
a PSD $mn\times mn$ matrix. However, the definition (\ref{eq:defcov4matrx}) ruins the structure of $X$ and thus makes the interpretation of each entry of $\Cov(X)$ 
vague.  A more natural expression for the covariance of a random matrix $X\in \R^{m\times n}$ should be a tensor of order 4, as in the following:
\beq\label{eq:defcovtensor4matrx}
\C:=\Cov(X)=E[(X-E[X])\times_{(1,3)} (X-E[X])] 
\eeq
Here we use outer-product $A\times_{(1,3)} A$, which is the same to $A\times_{(2,4)} A$, to make the size of tensor $\C$ as $m\times m\times n\times n$. 
Specifically, the 4-order tensor $\C=(C_{i_1i_2j_1j_2}) \in \R^{m\times m\times n\times n}$ is defined by 
\beq\label{eq: covtele4matr}
C_{i_1i_2j_1j_2} = \Cov(x_{i_1j_1}, x_{i_2j_2}) = E[(x_{i_1j_1}-\mu_{i_1j_1})(x_{i_2j_2}-\mu_{i_2j_2})]  
\eeq 
where $\mu=(\mu_{ij})=E[X]\in \R^{m\times n}$.\par
\indent An important tool for deriving moments is the characteristic function (CF).  Let $u\in \R$ be a random variable.  We denote by $\varphi_u(t)$ 
the CF of $u$, which is defined by 
\beq\label{cf01}
\varphi_u(t)=E[e^{\imath tu}]
\eeq
where $\imath$ is the imaginary unit. Let the CF $\varphi_\bx(\bft)$ be $k$ times differentiable. Then the $k$-moment of  $\bx\in \R^{n}$ equals
\beq\label{momentbyCF}
m_k[\bx]=\frac{1}{\imath^k}\frac{d^k}{d \bft^k}\varphi_\bx(\bft)\mid_{\bft=0}
\eeq
Similarly the $k$-central moment of a random vector $\bx\in \R^{n}$ is given by
\beq\label{eq:centrM01}
\bar{m}_k[\bx]=m_k[\bx-E[\bx]]=\frac{1}{\imath^k}\frac{d^k}{d\bft^k}\varphi_{\bx-E[\bx]}(\bft)\mid_{\bft=0},\quad \bft\in{R^p}
\eeq
From (\ref{momentbyCF}) we have $m_1\in \R^n$ and $m_2$ is an $n\times n$ matrix. The definition of $m_k$ for $k>2$ is constraint by that of the high order derivative
of a multivariate function.  \par
\indent  A matrix is called a random matrix if  each of its entries is a random variable.  A random matrix $X\in \R^{p\times q}$  can be regarded as a consequence of
matricization of a random vector $\vecc(X)\in \R^{pq}$.  We suppose the characteristic function $\varphi_X (T)$ of  $X$ be $k$ times differentiable with 
$T\in \R^{p\times q}$ being any arbitrary matrix. Then the $k$-moment of $X$ is defined by 

\beq\label{eq:kMbyCF4M}
m_k[X]=\frac{1}{\imath^k}\frac{d^k}{dT^k}\varphi_{X}(T)\mid_{T=0}
\eeq
Similarly the $k$-central moment $\bar{m}_k[X]$ of $X$ is defined by 
\beq\label{eq:kcentrMbyCF4M}
\bar{m}_k[X]=\frac{1}{\imath^k}\frac{d^k}{dT^k}\varphi_{X-E[X]}(T)\mid_{T=0} 
\eeq
Sometimes we denote by $m_k$($\bar{m}_k$) instead of $m_k[X]$ ($\bar{m}_k[X]$) if there is no risk of confusion.  Note that $m_k$($\bar{m}_k$)  is a tensor of order $2k$ whose size is  
\[ \overbrace{m\times m\times \ldots \times m}^{k}\times \overbrace{n\times n\times \ldots \times n}^{k} \]
when $X\in \R^{p\times q}$.  We have 
\begin{thm}\label{th:kmomenttensor}
Let $X=(x_{ij})\in \R^{p\times q}$ be a random matrix and let $M=E[X]$ be the mean matrix of  $X$.  Then its $k$-moment tensor and the $k$-central moment tensor are respectively 
\beq\label{eq:kMtensor}
m_k[X]=E[X^{\times k}] 
\eeq
and 
\beq\label{eq:kcentrMtensor}
\bar{m}_k[X]=E[(X-M)^{\times k}] 
\eeq
where $X^{\times k}:=\overbrace{X\times X\times \ldots \times X}^{k}$ is the $k$th power of $X$ in the sense of outer-product. 
\end{thm}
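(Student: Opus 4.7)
The plan is to differentiate the characteristic function entry-by-entry in the tensor sense of Definition~\ref{def: derivtensor}, pull the derivative inside the expectation, and then recognize the resulting $2k$-order tensor as $E[X^{\times k}]$ coordinate by coordinate.

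First I would take the natural matrix analogue of the vector characteristic function in (\ref{cf01}), namely
\[
\varphi_X(T)=E\bigl[\exp\bigl(\imath\,\tr(T^{\top}X)\bigr)\bigr]=E\Bigl[\exp\Bigl(\imath\sum_{i,j}t_{ij}x_{ij}\Bigr)\Bigr],\qquad T\in\R^{p\times q}.
\]
Because $\varphi_X$ is assumed $k$-times differentiable, a standard dominated-convergence argument (using integrability of the mixed moments up to order $k$) lets me interchange $\partial/\partial t_{i_s j_s}$ with the expectation, and each such partial pulls down a factor $\imath\,x_{i_s j_s}$ from the exponential. Iterating $k$ times gives
\[
\frac{\partial^{k}\varphi_X(T)}{\partial t_{i_1 j_1}\cdots\partial t_{i_k j_k}}=\imath^{k}\,E\bigl[x_{i_1 j_1}x_{i_2 j_2}\cdots x_{i_k j_k}\,e^{\imath\,\tr(T^{\top}X)}\bigr].
\]

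Next, specializing Definition~\ref{def: derivtensor} to the scalar-valued $\varphi_X$ on the matrix variable $T$, the $k$th derivative $d^{k}\varphi_X/dT^{k}$ is a $2k$-order tensor whose $(i_1,\ldots,i_k,j_1,\ldots,j_k)$-entry is exactly the mixed partial above. Evaluating at $T=0$ erases the exponential, and dividing by $\imath^{k}$ as required in (\ref{eq:kMbyCF4M}) yields
\[
(m_k[X])_{i_1\ldots i_k j_1\ldots j_k}=E\bigl[x_{i_1 j_1}x_{i_2 j_2}\cdots x_{i_k j_k}\bigr].
\]
Adopting the row-block/column-block index grouping already in force for the covariance tensor (cf.\ (\ref{eq:defcovtensor4matrx})--(\ref{eq: covtele4matr})), one has $(X^{\times k})_{i_1\ldots i_k j_1\ldots j_k}=x_{i_1 j_1}x_{i_2 j_2}\cdots x_{i_k j_k}$, which matches the displayed expectation entrywise and establishes (\ref{eq:kMtensor}). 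For (\ref{eq:kcentrMtensor}) the same computation with $Y:=X-M$ in place of $X$ works verbatim, since $\bar m_k[X]=m_k[Y]$ and $\varphi_Y(T)=e^{-\imath\,\tr(T^{\top}M)}\varphi_X(T)$ has precisely the same differentiation structure at $T=0$.

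I expect the genuinely delicate step to be bookkeeping rather than analysis. The convention of Definition~\ref{def: derivtensor} places all row indices in the first block and all column indices in the second, so I must confirm that the outer power $X^{\times k}$ is understood with this same structured ordering (mirroring the use of $\times_{(1,3)}$ in the covariance tensor) rather than as the naive tensor product that interleaves row and column modes; otherwise the claimed identity would hold only up to a permutation of modes. Making this ordering convention explicit, and verifying that the $k$ mixed partials really match the $k$ coordinate factors of $X^{\times k}$ slot-by-slot, is the only step where care is needed.
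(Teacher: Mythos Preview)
Your proposal is correct and follows essentially the same route as the paper: differentiate the characteristic function $\varphi_X(T)=E[\exp(\imath\langle T,X\rangle)]$ in the tensor sense of Definition~\ref{def: derivtensor}, evaluate at $T=0$, and identify the result with $E[X^{\times k}]$. The paper's own proof is considerably terser---it simply asserts the formula~(\ref{eq:kder4CF}) and cites (\ref{eq:kMbyCF4M})---whereas you spell out the entrywise partial derivatives, the interchange of $E$ and $\partial/\partial t_{ij}$, and the index-matching with $X^{\times k}$; your care about the row/column mode ordering is in fact more explicit than what the paper provides.
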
 

\begin{proof}
 We denote by $\phi=\phi_{X}(T)$ for simplicity.  By the tensor form of the higher order derivatives  defined in (\ref{eq: kdertensor}),  we have 
 \beq\label{eq:kder4CF}
\frac{d^k \phi}{dT^k}=\frac{1}{\imath^k} E[\exp{\imath \seq{T, X}}X^{\times k}] 
\eeq
Thus by  (\ref{eq:kMbyCF4M}), we get (\ref{eq:kMtensor}).  Similarly we can prove  (\ref{eq:kcentrMtensor}).
\end{proof}

\indent  For any given positive integers $n, s$ where $1\leq s\leq n$, we denote 
\[ \pi_s(n) := \set{\theta_s:=(i_1,i_2,\ldots,i_s): 1\leq i_1<i_2<\ldots < i_s\leq n } \]
and $\pi_0 :=\emptyset$ (the empty set ). Now let $\A, \B$ be tensors of order $p$ and $q$ respectively, and let $p+q=n$.  
The outer-product $\A\times_{\theta_s} \B$ is a tensor of order $n$ as defined by (\ref{eq:outprod}).  For $s=0$, $\pi_s(n) =\emptyset$, and we denote  
 $\A\times_{\emptyset} \B =\A$. On the other hand, we denote $\A\times_{[n]} \B=\B$ if $s=n=q$. \par 
\indent From Theorem \ref{th:kmomenttensor}, we have 
\begin{cor}\label{cor:kmoment4vec}
Let $\bx = (x_{j})\in \R^{n}$ be a random vector with mean vector $\mu = E[\bx]$.  Then its $k$-moment (central moment) is the $k$-order $n$-dimensional 
tensor $m_k[\bx] = E[\bx^{\times k}]$ ($\bar{m}_k[X] = E[(\bx-\mu)^{\times k}]$).  Furthermore, we have 
 \beq\label{eq:kcentrm2m}
\bar{m}_k = \sum\limits_{s=0}^k (-1)^s \sum\limits_{\theta_s\in \pi_s} m_{k-s}\times_{\theta_s} \mu^s 
\eeq
\end{cor}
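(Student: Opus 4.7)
The plan is to derive the three assertions separately, using Theorem \ref{th:kmomenttensor} for the first two and a binomial-style expansion of the outer product for the identity (\ref{eq:kcentrm2m}).

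First I would recognize that a column vector $\bx\in\R^n$ is simply the matrix case of Theorem \ref{th:kmomenttensor} with $q=1$. The outer-product $\bx\times\bx$ between two $n\times 1$ matrices is, under the convention of Section 2, exactly the rank-one tensor of order $2$ obtained from two vectors, and inductively $\bx^{\times k}$ coincides with the rank-$1$ symmetric tensor $\bx^k\in\st_{k;n}$ introduced after (\ref{eq:outprod}). Thus (\ref{eq:kMtensor}) and (\ref{eq:kcentrMtensor}) specialize to $m_k[\bx]=E[\bx^{\times k}]$ and $\bar m_k[\bx]=E[(\bx-\mu)^{\times k}]$, which gives the first half of the corollary essentially for free.

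The substantive part is the expansion formula (\ref{eq:kcentrm2m}). The key observation is that the outer product is multilinear in each argument, so there is a ``binomial theorem'' for the $k$-fold outer power of a sum. Writing $\bx-\mu$ in the $j$-th slot and expanding factor by factor gives
\[
(\bx-\mu)^{\times k}=\sum_{S\subseteq[k]}(-1)^{|S|}\,\bigl(\text{outer product with $\bx$ in positions }[k]\setminus S,\ \mu\text{ in positions }S\bigr).
\]
Grouping the $2^k$ terms by the cardinality $s=|S|$ and then by the ordered index set $\theta_s\in\pi_s(k)$ which records the positions occupied by the $\mu$-factors, each inner term is precisely $\bx^{\times(k-s)}\times_{\theta_s}\mu^{\times s}$ in the notation of (\ref{eq:outprod}). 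The boundary conventions $\pi_0=\emptyset$ and $\A\times_\emptyset\B=\A$, $\A\times_{[k]}\B=\B$ established just before the corollary cover the $s=0$ and $s=k$ endpoints cleanly.

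Taking expectations, the factors of $\mu$ are deterministic and pull out, while the remaining $k-s$ factors of $\bx$ give $E[\bx^{\times(k-s)}]=m_{k-s}$ by the first part of the corollary. Since the outer-product operation $\times_{\theta_s}$ acts only on the index arrangement (it is linear in each of its two tensor arguments), one has $E[\bx^{\times(k-s)}\times_{\theta_s}\mu^{\times s}]=m_{k-s}\times_{\theta_s}\mu^{s}$. Summing over $s$ and $\theta_s\in\pi_s(k)$ yields (\ref{eq:kcentrm2m}).

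The only real obstacle is bookkeeping: one must verify that distributing the outer product over the sum $\bx+(-\mu)$ in each slot genuinely produces terms of the form $m_{k-s}\times_{\theta_s}\mu^{s}$ as defined by (\ref{eq:outprod}), i.e.\ that the positions $\theta_s=(i_1<i_2<\cdots<i_s)$ are exactly those on which $\mu$ sits and that the induced indexing matches the definition $C_{i_1\ldots i_{k}}=A_{i_S}B_{i_T}$. This is a direct check from the definition; no nontrivial algebra is needed beyond the multilinearity of $\times_{\theta_s}$ and the linearity of expectation.
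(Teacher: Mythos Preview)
Your proposal is correct and follows essentially the same route as the paper: specialize Theorem \ref{th:kmomenttensor} to the vector case for the first assertions, then expand $(\bx-\mu)^{\times k}$ multilinearly as a binomial-type sum indexed by the positions $\theta_s\in\pi_s(k)$ where $\mu$ appears, and apply linearity of expectation to obtain (\ref{eq:kcentrm2m}). The paper's proof is simply a terser version of what you wrote, omitting the bookkeeping discussion you spell out.
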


\begin{proof}
The first part of the corollary is immediate from Theorem \ref{th:kmomenttensor}, and (\ref{eq:kcentrm2m}) can be obtained by 
\beyy
\bar{m}_k  &=& E[(\bx-\mu)^{\times k}] \\
                    &=& E[\sum\limits_{s=0}^k (-1)^s \sum\limits_{\theta_s\in \pi_s(k)} {\bx}^{k-s}\times_{\theta_s}\mu^s ]\\
                    &=& \sum\limits_{s=0}^k (-1)^s \sum\limits_{\theta_s\in \pi_s(k)} E[{\bx}^{k-s}]\times_{\theta_s} \mu^s\\
                   &=& \sum\limits_{s=0}^k (-1)^s \sum\limits_{\theta_s\in \pi_s(k)} m_{k-s}\times_{\theta_s} \mu^s 
\eeyy 
\end{proof}
\indent By Corollary \ref{cor:kmoment4vec}, we have $\bar{m}_2=m_2 -\mu^2 \in \R^{n\times n}$ which is the covariance of  $\bx$, and
\beq\label{eq: centrm3}
\bar{m}_3 = m_3 - \sum\limits_{k=1}^3 m_2\times_{k} \mu +2\mu^3
\eeq


\indent  We note that the entries of the $k$-moment of  a random vector $\bx\in \R^n$, by Corollary \ref{cor:kmoment4vec}, is 
\beq\label{eq: entryofMk} 
(m_k)_{i_1i_2\ldots i_k} = E[x_{i_1} x_{i_2}\ldots x_{i_k}]  
\eeq
which conforms to the traditional definition when $m_k$ is flattened to a matrix.      

\vskip 10pt

\section{Higher order moments of multivariate Gaussian distribution}
\setcounter{equation}{0}

Gaussian distribution is the most basic and important distribution in statistics.  The density function of a Gaussian vector $\bx\in \R^n$ with mean vector $\mu\in \R^n$ and 
covariance $\Sig\in \R^{n\times n}$ (usually assumed to be nonsingular), is 
\beq\label{pdf4gvec}
f_{\bx}(\bft)=(2\pi)^{-n/2}\det(\Sig)^{-1/2}\exp\set{-\frac{1}{2}\tr\set{\Sig^{-1}(\bft-\mu)(\bft-\mu)^{\top}}}
\eeq
where $\bft\in \R^n$ is arbitrary.  A random matrix $U\in \R^{m\times n}$ is called a \emph{Standard Normal matrix} or a $SN$-matrix if  
\beq
\vecc(U)\sim N_{mn}(0,I_{mn})
\eeq
where $I_k$ stands for the identity matrix.  A SN-matrix $U\in \R^{m\times n}$ is denoted by $U\sim N_{m,n}(0, I_m, I_n)$, meaning 
that all the columns $u_j$($1\leq j\leq n$) of $U$ are i.i.d. with $u_j\sim N_m(0, I_m)$ and all rows $w_i$ ($1\leq i\leq m$) of $U$ are i.i.d. with $w_i\sim N_n (0, I_n)$. 
A random matrix $X\in \R^{m\times n}$ is called a \emph{Gaussian} matrix if there exist constant matrices $\mu\in \R^{m\times n}, A\in \R^{m\times p}, B\in \R^{n\times q}$ 
such that  
\beq\label{eq:defGmatrix} 
X \overset{d}{\sim} \mu+AUB^{\top},\qquad U\sim N_{p,q}(0, I_p, I_q)
\eeq 
where $U \overset{d}{\sim} V$ means that two random variables (vectors, matrices) $U,V$ have the same distribution. This is denoted by $X\sim N_{m,n}(\mu, \Sig_1, \Sig_2)$.
In this situation, we call $X$ is a Gaussian matrix with parameters $(\mu, \Sig_1, \Sig_2)$ where $\Sig_1:=AA^{\top}, \Sig_2:= BB^{\top}$.\par 

\indent  Let $\mu\in \R^{m\times n}, \Sig_1\in \R^{m\times m}, \Sig_2\in \R^{n\times n}$ with $\Sig_k$ being PSD.  It is shown\cite{KR2005} that 
\begin{lem}\label{le:eqivcond4Gmatrx}
Let $X\in \R^{m\times n}$ be a random matrix. Then $X\sim N_{m,n}(\mu, \Sig_1, \Sig_2)$ if and only if it satisfies 
\begin{description}
\item[(1). ]  $E[X]=\mu\in \R^{m\times n}$.
\item[(2). ]  $X_{\cdot{}j}\sim N_m(\mu_{\cdot{}j},\sig^{(2)}_{jj}\Sig_1)$ for all $j\in [n]$.   
\item[(3). ]  $X_{i\cdot{}}\sim N_n(\mu_{i\cdot{}},\sig^{(1)}_{ii}\Sig_2)$ for all $i\in [m]$. 
\end{description}
where $A_{\cdot{}j}$($A_{i\cdot{}}$) denotes the $j$th column ($i$th row) of  matrix $A$, and $\Sig_k=(\sig_{ij}^{(k)})$ for $k=1,2$.  
\end{lem}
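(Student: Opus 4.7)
The lemma characterizes the matrix-variate normal $N_{m,n}(\mu,\Sig_1,\Sig_2)$ by three conditions on the mean and on the row/column marginals. I would prove the two directions separately: ($\Rightarrow$) is a direct unpacking of the representation (\ref{eq:defGmatrix}), while ($\Leftarrow$) requires upgrading marginal normality of the rows and columns to joint normality of $\vecc(X)$, which is the delicate step.

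For ($\Rightarrow$), assume $X \overset{d}{\sim} \mu + A U B^{\top}$ with $U \sim N_{p,q}(0,I_p,I_q)$, $\Sig_1 = AA^{\top}$, $\Sig_2 = BB^{\top}$. Condition (1) is immediate since $E[U]=0$. For (2), fix $j$ and set $b_j := (B^{\top})_{\cdot j}$, so that $X_{\cdot j} = \mu_{\cdot j} + A U b_j$. Because the columns of $U$ are i.i.d.\ $N_p(0,I_p)$, the vector $U b_j$ is a deterministic linear combination of independent standard Gaussians, giving $U b_j \sim N_p(0,\|b_j\|^2 I_p)$; and $\|b_j\|^2 = (BB^{\top})_{jj} = \sig_{jj}^{(2)}$. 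Applying $A$ then yields $X_{\cdot j} \sim N_m(\mu_{\cdot j},\sig_{jj}^{(2)} \Sig_1)$. Condition (3) follows by the symmetric argument applied to $X^{\top} \overset{d}{\sim} \mu^{\top} + B U^{\top} A^{\top}$, where $U^{\top} \sim N_{q,p}(0,I_q,I_p)$.

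For ($\Leftarrow$), the target is $\vecc(X) \sim N_{mn}(\vecc(\mu),\,\Sig_2 \otimes \Sig_1)$; once this is secured, any factorizations $\Sig_1 = AA^{\top}$ and $\Sig_2 = BB^{\top}$ (e.g.\ Cholesky) produce a standard normal $U$ witnessing (\ref{eq:defGmatrix}). The within-row and within-column covariances are already forced: (2) and (3) yield $\Cov(X_{ij},X_{i'j}) = \sig_{jj}^{(2)}(\Sig_1)_{ii'}$ and $\Cov(X_{ij},X_{ij'}) = \sig_{ii}^{(1)}(\Sig_2)_{jj'}$, both consistent with the Kronecker pattern $\Sig_2 \otimes \Sig_1$. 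The plan for joint normality is to compute the matrix characteristic function
\[
\varphi_X(T) = E\bigl[\exp(\imath\, \seq{T,X})\bigr] = E\Bigl[\exp\Bigl(\imath \sum_j T_{\cdot j}^{\top} X_{\cdot j}\Bigr)\Bigr],
\]
evaluate each inner exponential columnwise via (2), and combine across rows through an iterated reduction based on (3), ending at the target Gaussian form $\exp\bigl(\imath\,\tr(T^{\top}\mu) - \tfrac{1}{2}\tr(T \Sig_2 T^{\top} \Sig_1)\bigr)$.

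The main obstacle is precisely this last step: it is a classical cautionary fact that marginal Gaussianity of each individual row and each individual column does not by itself imply joint Gaussianity of all entries, and in particular the cross-diagonal covariances $\Cov(X_{ij},X_{i'j'})$ with $i \neq i'$ and $j \neq j'$ are not directly supplied by (2) or (3). To close the argument I would interpret conditions (2) and (3) as giving the full joint Gaussian distribution of each row and each column (not merely the scalar marginals of their entries) and then exploit the compatibility of the within-row and within-column covariance formulas to pin down the missing cross covariances through the required Kronecker structure; equivalently, one carries out the iterated characteristic-function reduction above until the integrand collapses to the claimed Gaussian form. After that, factoring the covariance and producing $U$ so that $X \overset{d}{\sim} \mu + A U B^{\top}$ is routine.
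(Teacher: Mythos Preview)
The paper does not give its own proof of this lemma: it merely states the result and attributes it to Kollo and von Rosen \cite{KR2005}. So there is nothing in the paper to compare your argument against, and I can only assess your proposal on its own merits.

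Your ($\Rightarrow$) direction is clean and correct. Extracting the $j$th column from $X=\mu+AUB^{\top}$ gives $\mu_{\cdot j}+A U b_j$ with $b_j$ the $j$th row of $B$, and the computation $Ub_j\sim N_p(0,\|b_j\|^2 I_p)$ followed by multiplication by $A$ is exactly right; the row case is symmetric.

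Your ($\Leftarrow$) direction, however, does not close, and you have essentially diagnosed why yourself. Conditions (2) and (3) only tell you that each column vector $X_{\cdot j}\in\R^m$ and each row vector $X_{i\cdot}\in\R^n$ is multivariate Gaussian with the indicated covariance. That is strictly weaker than joint Gaussianity of $\vecc(X)\in\R^{mn}$: it is possible to build a $2\times 2$ random matrix whose two columns and two rows are each bivariate normal with identity covariance, yet whose four entries are not jointly Gaussian (all $2$-dimensional marginals of a $4$-vector can be Gaussian without the full vector being Gaussian). Your proposed ``iterated characteristic-function reduction'' cannot work here because the factorisation
\[
E\Bigl[\exp\Bigl(\imath\sum_j T_{\cdot j}^{\top}X_{\cdot j}\Bigr)\Bigr]
\]
does not split column-by-column without some independence or conditional-Gaussian structure across columns, and conditions (1)--(3) supply none. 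Likewise, the cross covariances $\Cov(X_{ij},X_{i'j'})$ with $i\neq i'$, $j\neq j'$ are simply not determined by (2) and (3), so ``pinning them down through the Kronecker structure'' is not possible from the stated hypotheses alone.

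In short: either the lemma as written in the paper is missing a hypothesis (typically one also assumes $\vecc(X)$ is jointly Gaussian, or that the characteristic function of $X$ has a specified form), or the citation \cite{KR2005} is being used for a slightly different statement. Your forward direction is fine; for the converse you should consult the cited source to see what additional assumption is actually in force, rather than try to manufacture joint normality from row/column marginals.
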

\indent  An immediate corollary from Lemma \ref{le:eqivcond4Gmatrx} is 
\begin{cor}\label{co: Gmatrx2Gvec}
Let $\mu\in \R^{m\times n}, \Sig_1\in \R^{m\times m}, \Sig_2\in \R^{n\times n}$ with $\Sig_k$ ($k=1,2$) being PSD. Then  $X\sim N_{m,n}(\mu, \Sig_1, \Sig_2)$ 
implies   
\beq\label{eq: Gmatrx2vec}
\vecc(X) \sim N_{mn} (\vecc(\mu), \Sig_2\otimes \Sig_1)
\eeq
Furthermore, if $\Sig_k$($k=1,2$) both are positive definite, then the density function of $X$ is
\beq\label{eq: density4Gmatrix}
f_X (T)=(2\pi)^{-mn/2}\det(\Sig_1)^{-n/2}\det(\Sig_2)^{-m/2}\exp\set{\psi(T)}
\eeq
where $T\in R^{m\times n}$ is arbitrary and $\psi(T):=-\frac{1}{2}\tr[\Sig_1^{-1}(T-\mu)\Sig_2^{-1}(T-\mu)^{\top}]$. 
\end{cor}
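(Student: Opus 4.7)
The plan is to reduce the matrix--variate claim to its vector--variate counterpart via the $\vecc$ operator, and then read off the density from the standard multivariate Gaussian formula using two Kronecker-algebra identities. First I would apply $\vecc$ to the defining representation $X\overset{d}{\sim}\mu+AUB^{\top}$ with $\vecc(U)\sim N_{pq}(0,I_{pq})$. The standard identity $\vecc(AUB^{\top})=(B\otimes A)\vecc(U)$ exhibits $\vecc(X)$ as an affine transformation of a standard Gaussian vector, so it is itself Gaussian with mean $\vecc(\mu)$ and covariance $(B\otimes A)(B\otimes A)^{\top}$. The Kronecker mixed-product rule collapses this covariance to $(BB^{\top})\otimes(AA^{\top})=\Sig_2\otimes\Sig_1$, which is exactly (\ref{eq: Gmatrx2vec}).

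For the density, I would specialize to $\Sig_1,\Sig_2$ positive definite, so that $\Sig_2\otimes\Sig_1$ is PD as well, and invoke the formula (\ref{pdf4gvec}) for the $\vecc(X)$ density evaluated at $\vecc(T)$. Two routine Kronecker identities then close the argument. The multiplicativity property $\det(\Sig_2\otimes\Sig_1)=\det(\Sig_1)^{n}\det(\Sig_2)^{m}$ delivers the stated normalizing constant. For the exponent, the inversion rule $(\Sig_2\otimes\Sig_1)^{-1}=\Sig_2^{-1}\otimes\Sig_1^{-1}$ combined with the $\vecc$--trace identity
\[\vecc(Z)^{\top}(C\otimes B)\vecc(Z)=\tr(Z^{\top}BZC)\]
applied with $Z=T-\mu$, $B=\Sig_1^{-1}$, $C=\Sig_2^{-1}$, together with the cyclic invariance of the trace, rewrites the quadratic form as $\tr[\Sig_1^{-1}(T-\mu)\Sig_2^{-1}(T-\mu)^{\top}]=-2\psi(T)$, which gives the required exponential factor.

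The main obstacle is bookkeeping: keeping the transposes straight when applying $\vecc(AUB^{\top})=(B\otimes A)\vecc(U)$ (so that $B$ rather than $B^{\top}$ appears in the Kronecker factor), and exploiting the symmetry of $\Sig_1^{-1}$ and $\Sig_2^{-1}$ to bring the resulting trace into the specific asymmetric form appearing in $\psi(T)$. Once those identities are in place, everything else is a direct application of well-known Kronecker-product algebra.
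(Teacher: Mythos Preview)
Your argument is correct. The paper does not actually supply a proof of this corollary; it merely announces it as ``an immediate corollary from Lemma~\ref{le:eqivcond4Gmatrx}'' and moves on. So there is nothing in the paper to compare against line by line.

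That said, your route and the paper's implicit one differ. The paper points to Lemma~\ref{le:eqivcond4Gmatrx}, which characterizes $X\sim N_{m,n}(\mu,\Sig_1,\Sig_2)$ via the marginal distributions of rows and columns; presumably one would assemble the joint law of $\vecc(X)$ from those conditions. You instead work directly from the defining stochastic representation~(\ref{eq:defGmatrix}), apply $\vecc(AUB^{\top})=(B\otimes A)\vecc(U)$, and read off the covariance via the mixed-product rule. Your approach is cleaner and more self-contained: it avoids any appeal to the row/column characterization and makes the appearance of $\Sig_2\otimes\Sig_1$ transparent. The density computation via $\det(\Sig_2\otimes\Sig_1)=\det(\Sig_1)^{n}\det(\Sig_2)^{m}$ and the $\vecc$--trace identity is the standard textbook derivation and is carried out correctly, including the use of symmetry of $\Sig_k^{-1}$ and cyclicity of the trace to match the specific form of $\psi(T)$.
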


\indent  Given an even integer $k=2m, m\geq 1$.  A partition $\ga:=\set{\ga_1,\ga_2,\ldots,\ga_m}$ of set $[k]$ is called a \emph{2-partition} if  
$[k]=\ga_1\cup \ga_2\cup \ldots \cup \ga_m$ with $\abs{\ga_j}=2$ for each $j\in [m]$. Denote by $\Ga_2[k]$ the set of all 2-partitions of 
$[k]$  and let $a_m:=\abs{\Ga_2[k]}$ denote the cardinality of $\Ga_2[m]$. Then 
\beq\label{eq: 2partinumb}
a_m = (2m-1)!! 
\eeq 
where we define $a_m=1$ when $m\le 1$.  There are many methods (e.g. the graph theory) to prove  (\ref{eq: 2partinumb}). Since a 2-partition of set $[2m]$ corresponds to a 1-factor 
of a complete graph $K_{2m}$, $a_m$ is exactly the number of the 1-factors of $K_{2m}$, which satisfies the recurrence $a_m =(2m-1)a_{m-1}$, by which (\ref{eq: 2partinumb}) follows. \par 
\indent  Now we extend the 2-partitions of a set $[k]$ for any positive integer $k$.  For any integer $s$ with $k\ge 2s\ge 0$, we let $W$ be a subset of $[k]$ with $\abs{W}=k-2s$, and 
$\ga:=\set{\ga_1,\ldots,\ga_s}$ be a 2-partition of  $W^{c}:=[k]\backslash W$ ($W=\emptyset$ if $s=\lfix{k/2}$).  Define 
\[ \bar{\ga}:= \set{\ga_1,\ga_2,\ldots,\ga_s,\ga_{s+1}} \] 
with $\ga_{s+1}=W$\footnote{$\ga_{s+1}$ may be an empty set.}.  Then $\bar{\ga}$ is a partition of  $[k]$.  We call $\bar{\ga}$ a \emph{$[s,2]$-partition} of $[k]$,  and denote 
$\Pi(s, k)$ the set of all $[s,2]$-partitions of $[k]$.  \par 
\indent  A 2-partition $\ga$ uniquely determines the pattern of the $m$th power of a matrix $A$ in terms of the outer product. 
For example, when $m=2$($k=4$), we have three 2-partitions of set $[4]:=\set{1,2,3,4}$, that is, 
\[  \set{1,2}\cup \set{3,4}, \quad \set{1,3}\cup \set{2,4}, \quad  \set{1,4}\cup \set{2,3}. \]
Thus we have three different patterns of  $I_n\times I_n\times I_n$, i.e.,   
\[ I_n\times_{(1,2)} I_n, \quad  I_n\times_{(1,3)} I_n, \quad  I_n\times_{(1,4)} I_n. \]
For any matrices $A_1,A_2,\ldots,A_m$ and any $\ga\in \Ga_2[m]$, we denote 
\beq\label{eq:gammapower} 
A^{\ga}:=A_1\times_{\ga_2} A_2 \times_{\ga_3} A_3\times\ldots \times_{\ga_m} A_m 
\eeq
Note that (\ref{eq:gammapower}) is weel-defined since the outer-product satisfies the associativity law. Moreover, (\ref{eq:gammapower})  is denoted by 
$I_n^{\ga}$ when $A_1=A_2=\ldots =A_m=I_n$. Obviously $I_n^{\ga} \in \T_{k;n}$.  For any given index $\sig:=(i_1,i_2,\ldots, i_k)$ ($k=2m$), we have 
\beq\label{eq: eleofidentity}
 (I_n^{\ga})_{\sig} =\da_{i_{\ga_1}}\da_{i_{\ga_2}}\ldots \da_{i_{\ga_m}} 
\eeq
where $\da_{i_{\ga_l}}:=\da_{i_s i_t}$ if $\ga_l=\set{s,t}$. \par 

\indent  The following result gives the expressions for $k$-order moment of a SND vector.

\begin{thm}\label{th: M4sndvec}
Let $\bu\in \R^n$ be a SND random vector and let $m_k$ denotes the $k$-order moment of $\bu$. Then 
\begin{description}
\item[(1).]  $m_k = 0\in \T_{k;n}$ for all odd integer $k=1,3,5, \ldots$. 
\item[(2).]  For all even intgers $k=2m$, we have 
\beq\label{eq: Mk-even} 
m_k = \sum\limits_{\ga\in \Ga_2} \cI_n^{\ga}
\eeq  
\end{description}
\end{thm}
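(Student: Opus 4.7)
The plan is to work entry by entry. By (\ref{eq: entryofMk}), the tensor identity (\ref{eq: Mk-even}) is equivalent to showing that, for every index $\sig=(i_1,\ldots,i_k)\in S(k,n)$,
\[
E[u_{i_1}u_{i_2}\cdots u_{i_k}] \;=\; \sum_{\ga\in \Ga_2[k]} (\cI_n^{\ga})_{\sig}
\]
when $k=2m$ is even, and that the left-hand side vanishes when $k$ is odd. Combining this entrywise identity over all $\sig$ yields (\ref{eq: Mk-even}).

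For the odd case, I would use a symmetry argument: since the SND density on $\R^n$ is even in each coordinate, $-\bu\overset{d}{\sim}\bu$, and therefore $E[u_{i_1}\cdots u_{i_k}]=(-1)^k E[u_{i_1}\cdots u_{i_k}]$, which forces every odd moment to be zero and hence $m_k=0\in\T_{k;n}$.

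For the even case $k=2m$, the natural tool is Isserlis' (Wick's) formula, which I would derive inductively from the Gaussian integration-by-parts identity (Stein's lemma): for any polynomial $g$ in $\bu$ and any $i\in[n]$,
\[
E[u_{i}\,g(\bu)] \;=\; \sum_{j=1}^{n}\da_{ij}\,E\!\left[\tfrac{\partial g}{\partial u_j}(\bu)\right].
\]
Applying this with $g(\bu)=u_{i_2}u_{i_3}\cdots u_{i_{2m}}$ produces the recursion
\[
E[u_{i_1}u_{i_2}\cdots u_{i_{2m}}] \;=\; \sum_{j=2}^{2m}\da_{i_1 i_j}\,E\bigl[\,u_{i_2}\cdots \widehat{u_{i_j}}\cdots u_{i_{2m}}\bigr],
\]
with $\widehat{\phantom{x}}$ denoting omission. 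This exactly mirrors the combinatorial recursion $a_m=(2m-1)\,a_{m-1}$ behind (\ref{eq: 2partinumb}): every $[2]$-partition $\ga\in\Ga_2[2m]$ is obtained by pairing index $1$ with some $j\in\{2,\ldots,2m\}$ and adjoining a $[2]$-partition of the remaining $2m-2$ indices. Induction on $m$ (base case $m=1$ being $E[u_{i_1}u_{i_2}]=\da_{i_1i_2}$) then gives
\[
E[u_{i_1}\cdots u_{i_{2m}}] \;=\; \sum_{\ga\in\Ga_2[2m]}\,\prod_{\{s,t\}\in\ga}\da_{i_s i_t}.
\]

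To finish, I would identify the scalar product $\prod_{\{s,t\}\in\ga}\da_{i_s i_t}$ with $(\cI_n^{\ga})_{\sig}$ via (\ref{eq: eleofidentity}), and sum over $\sig$ to package the entrywise statement into the tensor identity (\ref{eq: Mk-even}). The main obstacle is the Isserlis step itself: everything else (the odd-case vanishing, and the passage from scalar entries to the tensor $\sum_{\ga}\cI_n^{\ga}$) is essentially bookkeeping once the notation of Section 2 and the Kronecker-product expression (\ref{eq: eleofidentity}) are in hand. The cleanest self-contained route is the Stein/induction argument above, which simultaneously produces the pairing structure and the counting $a_m=(2m-1)!!$ without having to differentiate $\exp(-\|\bft\|^2/2)$ directly $k$ times.
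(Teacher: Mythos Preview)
Your proposal is correct and takes a genuinely different route from the paper's proof. The paper also works entrywise, but instead of invoking Stein's lemma it exploits the independence of the coordinates of a SND vector: writing an index $\sig=(i_1,\ldots,i_k)$ as $(r_1,\ldots,r_n)$-type, the paper factors $(m_k)_{\sig}=E[u_1^{r_1}]\cdots E[u_n^{r_n}]$, uses the one-dimensional formula $E[u^{2r}]=(2r-1)!!$ (and $E[u^{2r+1}]=0$), and then separately counts how many 2-partitions $\ga$ of $[k]$ satisfy $\da_{i_{\ga_1}}\cdots\da_{i_{\ga_m}}=1$, getting the same product $\prod_t (r_{j_t}-1)!!$. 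Your Isserlis/Wick recursion via Gaussian integration by parts is more structural: the combinatorics of $\Ga_2[2m]$ emerges automatically from the induction rather than from a separate counting argument, and the proof extends verbatim to a general centered Gaussian $\bv\sim N_n(0,\Sig)$ by replacing $\da_{ij}$ with $\Sig_{ij}$, which in fact yields Lemma~\ref{le: mk_SND2ND} directly without the intermediate step $[A]\cI_n^{\ga}=\Sig^{\ga}$. The paper's approach, on the other hand, is more elementary in that it needs only independence and the univariate moments, with no integration-by-parts identity. One small wording slip: at the end you do not ``sum over $\sig$'' to obtain (\ref{eq: Mk-even}); the entrywise equality for every $\sig$ \emph{is} the tensor equality.
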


\begin{proof} 
Let $\bu=(u_1,u_2,\ldots,u_n)^{\top}$ where $n>1$.  For any given positive integers $k>1$, we denote 
\[ R[n,k]:=\set{\al=(r_1,r_2,\ldots,r_n): r_1+r_2+\ldots+r_n=k, r_s\in [k]_0, \forall s\in [n] } \] 
For any $\phi=(i_1,i_2,\ldots,i_k)\in S(m,n)$, we call $\phi$ an $\al$-type where $\al=(r_1,r_2,\ldots,r_n) \in R[n,k]$, if for each $s\in [k]$,
\[ r_s = \abs{\set{t: i_t =s, t=1,2,\ldots, k}} \] 
that is equivalent to the condition $x_{i_1}x_{i_2}\ldots x_{i_k}=x_1^{r_1}x_2^{r_2}\ldots x_n^{r_n}$ for any vector $\bx=(x_1,x_2,\ldots,x_n)^{\top}$. 
For any index $\tau:=(i_1,i_2,\ldots, i_k)\in S(k,n)$, suppose $\tau$ is $\al$-type where $\al=(r_1,r_2,\ldots,r_n) \in R[n,k]$. Then   
\beyy\label{eq: kmequivform}
(m_k)_{i_1i_2\ldots i_k}  &=& E[u_{i_1}u_{i_2}\ldots u_{i_k}]\\
                                        &=& E[u_1^{r_1}u_2^{r_2}\ldots u_n^{r_n}]\\
                                        &=& E[u_1^{r_1}] E[u_2^{r_2}] \ldots E[u_n^{r_n}]   
\eeyy 
The last equality follows from the fact that $u_1,u_2,\ldots,u_n$ are independent since $\bu\sim N_n(0, I_n)$. Note that $E[u_j^{r_j}]=1$ if $r_j=0$. \par
\indent Now we prove the first item. Let $k>1$ be any odd integer and $\tau:=(i_1,i_2,\ldots, i_k)\in S(k,n)$ be an $\al$-type. Then there exists $s\in [n]$ 
such that $r_s$ is odd, it follows that $E[u_s^{r_s}]=0$ since $u_s\sim N(0,1)$. By (\ref{eq: kmequivform}) we immediately get 
$(m_k)_{i_1i_2\ldots i_k}=0$. Consequently we have $m_k=0$ for all odd integer $k$. \par 
\indent  To prove the second item, we let $k=2m$ ($m=1,2,\ldots, $) and denote the right hand side of (\ref{eq: Mk-even}) by $\A$.  For any given $\sig\in S(k,n)$, 
let $\sig$ be a $(r_1,r_2,\ldots,r_n)$-type. We want to show that $A_{\sig} = (m_k)_{\sig}=\la_{\sig}$ where $\la_{\sig}$ is defined as 
\beq\label{eq: deflambda} 
\la_{\sig}= \prod\limits_{i=1}^{n}(r_i -1)!! 
\eeq 
For this purpose, we write 
\[ P(\sig) :=\set{j\in [n]: r_j >0 } =\set{j_1,j_2,\ldots,j_T },\] 
and let $\abs{P(\sig)} = T$, i.e., the number of positive $r_i$s, which is related to $\sig$. We call $\sig$ an \emph{essentially $r[P(\sig)]$-type} index. Then   
\beyy
(m_k)_{\sig}  &=& E[x_{i_1}x_{i_2}\ldots x_{i_k} ] \\
                      &=& E[x_1^{r_1}x_2^{r_2}\ldots x_n^{r_n}] \\
                      &=& E[x_{j_1}^{r_{j_1}} x_{j_2}^{r_{j_2}} \ldots x_{j_T}^{r_{j_T}}]                       
\eeyy 
It follows that 
\beq\label{eq: Meleofr-type} 
(m_k)_{\sig} = E[x_{j_1}^{r_{j_1}}] E[x_{j_2}^{r_{j_2}}] \ldots E[x_{j_T}^{r_{j_T}}]
\eeq 
If there is a $t\in [T]$ such that $r_{j_t}$ is an odd number, then $E[x_{j_T}^{r_{j_T}}]=0$ by (1) and thus $(m_k)_{\sig}=0$ by (\ref{eq: Meleofr-type}).  
It follows that each nonzero entry of $m_k$ is associated with a $\sig\in S(k,n) $ of a $(r_1,r_2,\ldots,r_n)$-type where each $r_i$ is even (including 0). 
This fact is coincident with that of $\A$ as we can verify by simple deduction. Furthermore, we have 
\[ 
A_{\sig} = \sum\limits_{\ga} \da_{i_{\ga_1}}\da_{i_{\ga_2}}\ldots \da_{i_{\ga_m}} 
\] 
by (\ref{eq: eleofidentity}). Since $\sig$ is $(r_1,r_2,\ldots,r_n)$-type or essentially $r[P(\sig)]$-type where each $r_j$ is even, there are 
\beq  
(r_{j_1}-1)!! (r_{j_2}-1)!!\ldots (r_{j_T}-1)!!  
\eeq 
 
2-partitions $\ga$ of $\set{i_1,i_2,\ldots,i_k}$ such that  
$\da_{i_{\ga_1}}\da_{i_{\ga_2}}\ldots \da_{i_{\ga_m}}=1$. It turns out that  $A_{\sig} = \la_{\sig}$.  On the other hand, we have 
\beyy
(m_k)_{\sig}  &=& E[x_{j_1}^{r_{j_1}} x_{j_2}^{r_{j_2}} x_{j_T}^{r_{j_T}}] \\
                     &=& E[x_{j_1}^{r_{j_1}}] E[x_{j_2}^{r_{j_2}}] \ldots x_{j_T}^{r_{j_T}} \\
                     &=& (r_{j_1}-1)!! (r_{j_2}-1)!!\ldots (r_{j_T}-1)!! \\
                     &=& \la_{\sig}
 \eeyy
Consequently we have $A_{\sig} = (m_k)_{\sig}$ for all $\sig=(i_1,i_2,\ldots, i_k)\in S(k,n)$ for $k=2m$. The proof is completed. 
 \end{proof}
\indent  It is obvious from the proof of Theorem \ref{th: M4sndvec} that 
\begin{cor}\label{co: 4moment4snd} 
 Let $x\sim N(0,1)$ be a SND random variable.  Then its $2n$-order moment $m_{2n}=(2n-1)!!$.  
\end{cor}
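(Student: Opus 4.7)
The plan is to read the corollary as the one-dimensional specialization of Theorem \ref{th: M4sndvec}. First, I would take the ambient dimension in that theorem to be $1$, so that $\bu\in \R^1$ becomes the scalar $x\sim N(0,1)$. The index set $S(2n,1)$ is then a singleton consisting of $\sig=(1,1,\ldots,1)$, so the tensor $m_{2n}$ collapses to the scalar $E[x^{2n}]$ by (\ref{eq: entryofMk}).

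Next, I would apply part (2) of Theorem \ref{th: M4sndvec} with $k=2n$, which gives $m_{2n} = \sum_{\ga\in \Ga_2[2n]} \cI_1^{\ga}$. In dimension $1$, each pattern $\cI_1^{\ga}$ is the outer product of $n$ copies of the $1\times 1$ identity, and its unique entry is a product of Kronecker deltas $\da_{11}\cdots\da_{11}=1$ by (\ref{eq: eleofidentity}). Thus every summand equals $1$, and $m_{2n}$ equals the cardinality $\abs{\Ga_2[2n]}$.

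Finally, (\ref{eq: 2partinumb}) already records $\abs{\Ga_2[2n]} = (2n-1)!!$, which gives $m_{2n}=(2n-1)!!$ as required.

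There is no genuine obstacle; the argument is purely bookkeeping on top of Theorem \ref{th: M4sndvec}. The only point worth flagging is the notational clash between the $n$ appearing in the corollary (half the order of the moment) and the $n$ used for the ambient dimension in the theorem, which must be specialized to $1$ for the reduction to apply. Alternatively, one could bypass the tensor reduction altogether and simply read off the scalar case from the proof of Theorem \ref{th: M4sndvec} itself, where the essentially $r[P(\sig)]$-type formula $(r_{j_1}-1)!!\cdots(r_{j_T}-1)!!$ collapses to $(2n-1)!!$ when $T=1$ and $r_{j_1}=2n$.
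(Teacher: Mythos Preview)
Your proposal is correct and matches the paper's treatment: the paper gives no explicit proof, merely stating that the corollary ``is obvious from the proof of Theorem \ref{th: M4sndvec},'' which is precisely your alternative route via the formula $(r_{j_1}-1)!!\cdots(r_{j_T}-1)!!$ with $T=1$. Your primary argument---specializing the ambient dimension to $1$ and invoking the cardinality count (\ref{eq: 2partinumb})---is an equally valid and arguably cleaner reading that uses the theorem's statement rather than its proof.
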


\begin{cor}\label{co: 4moment4snd} 
 Let $\bu\in \R^n$ be a SND random vector.  Then its $4$-order moment $m_4$ is 
\beq\label{eq: m4}
  m_4 =  I_n\times_{\set{1,2}} I_n + I_n\times_{\set{1,3}} I_n + I_n\times_{\set{1,4}} I_n
\eeq
\end{cor}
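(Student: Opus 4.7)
The plan is to obtain this as an immediate specialization of Theorem \ref{th: M4sndvec} with $k=4$. Since $k=4$ is even with $m=k/2=2$, part (2) of that theorem gives
\[
m_4 = \sum_{\ga\in \Ga_2[4]} \cI_n^{\ga},
\]
so the task reduces to enumerating the set $\Ga_2[4]$ and matching each term $\cI_n^{\ga}$ with one of the three outer products appearing on the right-hand side of (\ref{eq: m4}).

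First I would count and list the 2-partitions of $[4]$. By formula (\ref{eq: 2partinumb}) there are $a_2=(2\cdot 2-1)!!=3$ of them, and they can be written down directly:
\[
\set{\set{1,2},\set{3,4}},\quad \set{\set{1,3},\set{2,4}},\quad \set{\set{1,4},\set{2,3}}.
\]
Next I would apply the elementwise identity (\ref{eq: eleofidentity}) to each $\ga=\set{\ga_1,\ga_2}$ to obtain $(\cI_n^{\ga})_{i_1i_2i_3i_4}=\da_{i_{\ga_1}}\da_{i_{\ga_2}}$. The three partitions then produce $\da_{i_1i_2}\da_{i_3i_4}$, $\da_{i_1i_3}\da_{i_2i_4}$, and $\da_{i_1i_4}\da_{i_2i_3}$, respectively. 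Comparing these with the elementwise formulas computed in Example \ref{exm2-1} shows that these three tensors are exactly $I_n\times_{\set{1,2}}I_n$, $I_n\times_{\set{1,3}}I_n$, and $I_n\times_{\set{1,4}}I_n$. Summing yields (\ref{eq: m4}).

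The argument is essentially a bookkeeping exercise with no real obstacle; the only thing to be careful about is that the choice of which block of $\ga$ plays the role of $\theta$ in $I_n\times_{\theta}I_n$ is immaterial, thanks to the identity $A\times_{\theta}A=A\times_{\theta^c}A$ noted for 4-order outer products in Example \ref{exm2-1}. This ensures that the three terms on the right-hand side of (\ref{eq: m4}) are in unambiguous bijection with the three elements of $\Ga_2[4]$.
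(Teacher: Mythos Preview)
Your argument is correct and is precisely the route the paper itself flags as the immediate one: right before its proof, the paper remarks that the corollary ``can be deduced directly by Theorem \ref{th: M4sndvec} and Example \ref{exm2-1},'' which is exactly what you do. The proof the paper then writes out is deliberately an \emph{alternative} verification ``from the different aspects'': it fixes an index $(i,j,k,l)$, splits into cases according to the number $t\in[4]$ of distinct values among $i,j,k,l$, computes $M_{ijkl}=E[u_iu_ju_ku_l]$ by evaluating the Gaussian integrals (using independence of the coordinates of $\bu$), and checks in each case that this equals $A_{ijkl}=\da_{ij}\da_{kl}+\da_{ik}\da_{jl}+\da_{il}\da_{jk}$. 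Your approach is shorter and structurally cleaner since it reuses the general theorem; the paper's approach is more self-contained and serves as an independent sanity check that does not rely on Theorem \ref{th: M4sndvec}.
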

\indent Corollary \ref{co: 4moment4snd} can be deduced directly by Theorem \ref{th: M4sndvec} and Example 4.1. Here we present an alternative proof to double
check the result from the different aspects. \par

\begin{proof}  For convenience, we denote by $\A$ the right hand side of  (\ref{eq: m4}). Then $\A=(A_{ijkl})\in \T_{4;n}$. We need to show that $M_{ijkl}=A_{ijkl}$ 
for all $\set{i,j,k,l}\in S(4,n)$ where $M_{ijkl}$ is the element of $m_4$ indexed by $(i,j,k,l)$. Denote by $t$ the number of distinct elements in $\set{i,j,k,l}$.  Then 
$t\in [4]$. We need only to consider the following five cases in terms of $t$ based on the symmetry of  $m_4$ and $\A$. \par
\noindent (1)   $t=1$, i.e., $i=j=k=l\in [n]$.  Then $A_{iiii}=3$ by the definition of $\A$.  On the other hand,
\beyy
M_{iiii}  & = & (2\pi)^{-n/2} \int_{\R^n} u_i^4 \exp\set{-\frac{1}{2}\bu^{\top}\bu} d\bu \\
               & =  & (2\pi)^{-1/2} \int_{-\infty}^{+\infty} u_i^4 \exp\set{-\frac{1}{2}u_i^2} du\\
               & =  & 3
\eeyy
Thus we have $M_{iiii}=A_{iiii}$ for all $i\in [n]$. \par
\noindent (2)   $t=2$. There are two subcases for this situation. 
\begin{itemize}
\item $i=j=k\neq l$.  Then
\beyy
M_{iiil} &=&\int u_i^3 u_l f(\bu) d\bu \\
              &= &((2\pi)^{-1/2}\int u_i^3 \exp\set{-\frac{1}{2}u_i^2}du_i)((2\pi)^{-1/2}\int u_l \exp\set{-\frac{1}{2}u_l^2}du_l) \\
              &=& 0
\eeyy
where $f_{\bu}(u)=(2\pi)^{-n/2}\exp\set{-\frac{1}{2}\bu^{\top}\bu}$ is the pdf of $\bu$.  On the other hand, we have
\[ A_{iiil} = \da_{ii}\da_{il}+  \da_{il}\da_{ii}+  \da_{ii}\da_{il}=0 \]
since $\da_{il}=0$ ($\da_{ij}$ is the Kronecker constant with $\da_{ij}=1$ iff $i=j$). This confirms $M_{iiil}=A_{iiil}$ in this subcase. \par
\item $i=j\neq k=l$.  Then
\beyy
M_{iikk} &=&\int u_i^2 u_k^2 f(\bu) d\bu \\
              &= &((2\pi)^{-1/2}\int u_i^2 \exp\set{-\frac{1}{2}u_i^2}du_i)((2\pi)^{-1/2}\int u_k^2 \exp\set{-\frac{1}{2}u_l^2}du_l) \\
              &=& E[u_i^2]E[u_k^2] =\sig_i^2 \sig_k^2 =1
\eeyy
On the other hand, we have
\[ A_{iikk} = \da_{ii}\da_{kk}+  \da_{ik}\da_{ik}+  \da_{ik}\da_{ik}=1  \]
This confirms $M_{iikk}=A_{iikk}$ in this subcase.
\end{itemize}
  
\noindent (3)  $t=3$.  We need to consider the case when $i=j$ and $i,k,l$ are distinct.  Then by the above argument we have
\[
M_{iikl} = E[u_i^2] E[u_k] E[u_l] =0
\]
for all distinct $i,k,l\in [n]$. On the other hand, we have
\[
A_{iikl} = \da_{ii}\da_{kl}+  \da_{ik}\da_{il}+  \da_{il}\da_{ik}=0
\]
Thus  $M_{iikl}=A_{iikl}$. \par
\noindent (4)  $t=4$, i.e., $i,j,k,l$ are all distinct.  Then similar argument as above yields $M_{ijkl}=A_{ijkl}$ for all distinct $(i,j,k,l)\in S(4,n)$.
 This concludes the proof that $m_4 = \A$.
\end{proof} 

\indent  The next two lemmas will be used to prove our main result.
\begin{lem}\label{le: u2au}
Let $\bu\in \R^n$ be a random vector and $\bv = A\bu$ where $A\in \R^{m\times n}$ is a constant matrix. Then  
\beq\label{eq: k-MbyTransf}
m_k(\bv) = [A]m_k(\bu)
\eeq
where $[A]m_k(\bu) = A\times_1\times_2\ldots \times_k m_k(\bu) $.  
\end{lem}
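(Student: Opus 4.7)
The plan is to reduce the identity to a deterministic outer-product identity for a single sample and then pass expectations through by linearity, avoiding any probabilistic arguments beyond that last step.

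First I would observe that by Corollary \ref{cor:kmoment4vec}, the $k$-moment can be written as the expected $k$-fold outer product, so $m_k(\bv) = E[\bv^{\times k}]$ and $m_k(\bu) = E[\bu^{\times k}]$. Since expectation is linear and commutes with the (deterministic) multilinear operation $\X \mapsto [A]\X$ on tensors, it suffices to prove the pointwise identity
\beq\label{eq: outprodtransf}
(A\bu)^{\times k} = [A](\bu^{\times k})
\eeq
for every realization of $\bu$, and then apply $E[\cdot]$ to both sides to obtain (\ref{eq: k-MbyTransf}).

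Next I would verify (\ref{eq: outprodtransf}) entrywise. Fix any index $(i_1,i_2,\ldots,i_k)$. Writing $\bv = A\bu$ coordinatewise gives $v_{i_r} = \sum_{j_r} A_{i_r j_r} u_{j_r}$ for each $r \in [k]$, so the left-hand side of (\ref{eq: outprodtransf}) has entry
\beyy
\bigl((A\bu)^{\times k}\bigr)_{i_1 i_2 \ldots i_k}
 &=& \prod_{r=1}^{k} v_{i_r} \\
 &=& \prod_{r=1}^{k} \sum_{j_r=1}^{n} A_{i_r j_r} u_{j_r} \\
 &=& \sum_{j_1,\ldots,j_k} A_{i_1 j_1} A_{i_2 j_2} \cdots A_{i_k j_k}\, u_{j_1} u_{j_2} \cdots u_{j_k}.
\eeyy
On the other hand, iterating the left $k$-mode multiplication rule (\ref{eq:MatrxTensor}) through all $k$ modes and noting that the operations in distinct modes commute, the entry of $[A](\bu^{\times k})$ is exactly
\[
\sum_{j_1,\ldots,j_k} A_{i_1 j_1} A_{i_2 j_2} \cdots A_{i_k j_k}\, (\bu^{\times k})_{j_1 j_2 \ldots j_k}
= \sum_{j_1,\ldots,j_k} A_{i_1 j_1} A_{i_2 j_2} \cdots A_{i_k j_k}\, u_{j_1} u_{j_2} \cdots u_{j_k},
\]
which coincides with the left-hand side.

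Taking expectation of both sides of (\ref{eq: outprodtransf}), and using linearity of expectation to pass $E[\cdot]$ through the sum over $(j_1,\ldots,j_k)$ (the coefficients $A_{i_r j_r}$ are deterministic), yields
\[
m_k(\bv) = E\bigl[(A\bu)^{\times k}\bigr] = [A]\, E[\bu^{\times k}] = [A]\, m_k(\bu),
\]
which is (\ref{eq: k-MbyTransf}). There is no real obstacle here; the only care needed is the bookkeeping that shows the $k$-fold iterated left mode-multiplication $A \times_1 \times_2 \cdots \times_k$ produces precisely the product of scalars $\prod_r A_{i_r j_r}$ contracted against the indices of $\bu^{\times k}$, which is immediate from (\ref{eq:MatrxTensor}) applied successively in each mode.
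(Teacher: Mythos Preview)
Your proof is correct and follows the same line as the paper's argument: establish the deterministic identity $(A\bu)^{\times k}=[A](\bu^{\times k})$ and then pull the expectation through by linearity. The paper compresses this into a single chain $m_k(\bv)=E[(A\bu)^k]=E[[A]\bu^k]=[A]E[\bu^k]=[A]m_k(\bu)$, whereas you have supplied the entrywise verification of the middle step that the paper leaves implicit.
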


\begin{proof}
(\ref{eq: k-MbyTransf}) can be deduced by 
\[ m_k(\bv) = E[(A\bu)^k] = E[[A]\bu^k] =[A]E[\bu^k] = [A]m_k(\bu) . \] 
\end{proof}

\begin{lem}\label{le: mk_SND2ND}
Let $\bu\sim N_n(0,I_n)$ and $\bv = A\bu$ where $A\in \R^{m\times n}$ is constant. Then we have 
\begin{description}
\item[(1).]  For each odd  $k$,  $m_k(\bv)=0$.
\item[(2).]  For each even $k$, 
\beq\label{eq: MkTransf}
m_k(\bv) = \sum\limits_{\ga\in \pi_{k}} \Sig^{\ga}
\eeq
where $\Sig=AA^{\top}$ and $\pi_{k}$ is the set of 2-partitions of set $[k]$.     
\end{description}
\end{lem}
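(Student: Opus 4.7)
The plan is to reduce the lemma directly to Theorem~4.3 via Lemma~4.4, turning the statement into a purely algebraic identity about how the all-mode multiplication $[A]$ interacts with the identity-tensor building blocks $\cI_n^{\ga}$.

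First I would write $m_k(\bv)=[A]m_k(\bu)$ by Lemma~4.4. The odd case is then immediate: Theorem~4.3(1) gives $m_k(\bu)=0\in\T_{k;n}$, and all-mode multiplication by $A$ preserves the zero tensor, so $m_k(\bv)=0$. For the even case $k=2m$, Theorem~4.3(2) yields $m_k(\bu)=\sum_{\ga\in\Ga_2[k]}\cI_n^{\ga}$, and by linearity of the all-mode multiplication $[A]$ in its tensor argument,
\begin{equation*}
m_k(\bv)=\sum_{\ga\in\Ga_2[k]}[A]\,\cI_n^{\ga}.
\end{equation*}
Matching this against the right hand side of \eqref{eq: MkTransf} therefore reduces the entire claim to the single identity $[A]\,\cI_n^{\ga}=\Sig^{\ga}$ for each 2-partition $\ga$ (and the implicit identification of the index set $\pi_k$ in the statement with $\Ga_2[k]$).

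The core computation, which I expect is the only genuine obstacle, is the verification of this identity entrywise. For a 2-partition $\ga=\set{\ga_1,\ldots,\ga_m}$ with $\ga_l=\set{s_l,t_l}$, the definition \eqref{eq: eleofidentity} gives $(\cI_n^{\ga})_{j_1\ldots j_k}=\prod_{l=1}^m\da_{j_{s_l}j_{t_l}}$. Applying $[A]=A\times_1 A\times_2\cdots\times_k$ I get
\begin{equation*}
([A]\,\cI_n^{\ga})_{i_1\ldots i_k}=\sum_{j_1,\ldots,j_k}\Bigl(\prod_{r=1}^k A_{i_r j_r}\Bigr)\prod_{l=1}^m \da_{j_{s_l}j_{t_l}}.
\end{equation*}
The Kronecker factors collapse each pair of summation indices $(j_{s_l},j_{t_l})$ to a single index, and the sum over that common index produces $\sum_{j}A_{i_{s_l}j}A_{i_{t_l}j}=(AA^{\top})_{i_{s_l}i_{t_l}}=\Sig_{i_{s_l}i_{t_l}}$. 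Since the pairs in $\ga$ form a partition of $[k]$, the product factorises cleanly and yields $\prod_{l=1}^m \Sig_{i_{s_l}i_{t_l}}$, which by definition \eqref{eq:gammapower} is exactly $(\Sig^{\ga})_{i_1\ldots i_k}$.

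Putting these pieces together gives $m_k(\bv)=\sum_{\ga}\Sig^{\ga}$ for even $k$, completing the proof. The only subtlety worth being careful about is bookkeeping in the last display, namely that the $m$ independent sums really do decouple into a product across the blocks of $\ga$; this is true precisely because $\ga$ is a \emph{partition}, so the $k$ summation indices $j_1,\ldots,j_k$ split disjointly into $m$ paired groups with no overlap between distinct $\ga_l$.
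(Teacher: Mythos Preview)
Your proposal is correct and follows essentially the same route as the paper: reduce via $m_k(\bv)=[A]m_k(\bu)$ (this is Lemma~\ref{le: u2au}, numbered 4.6 rather than 4.4) together with Theorem~\ref{th: M4sndvec}, and then establish the key identity $[A]\cI_n^{\ga}=\Sig^{\ga}$. The only cosmetic difference is that the paper verifies this identity structurally, by distributing the all-mode action of $A$ across the outer-product factors so that each $I_n$ becomes $AI_nA^{\top}=\Sig$, whereas you carry out the equivalent computation entrywise via the Kronecker deltas.
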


\begin{proof}
The first item can be shown by Lemma \ref{le: u2au} and $m_k(\bu)=0$ for odd $k$ due to (1) of Theorem \ref{th: M4sndvec}.  
To prove the second item, we note from Lemma \ref{le: u2au} that 
\beq\label{eq: mk2Mk}
m_k(\bv)=[A]E[\bu^k] = [A]\sum\limits_{\ga\in \pi_{k}} \cI_n^{\ga}
\eeq
The last equality is due to (2) of  Theorem \ref{th: M4sndvec}.  Furthermore,  
\beyy 
 [A] \cI_n^{\ga} &=& A\times_1\times_2\ldots \times_k (I_n\times_{\ga_2}I_n\times_{\ga_3}\ldots \times_{\ga_m} I_n)\\
                         &=&(AI_nA^{top}) \times_{\ga_2}(AI_nA^{top}) \times_{\ga_3}\ldots \times_{\ga_m}(AI_nA^{top})\\
                         &=&\Sig\times_{\ga_2}\Sig\times_{\ga_3}\ldots \times_{\ga_m}\Sig
\eeyy
It follows that  
\beq\label{eq: prf03}
[A] \cI_n^{\ga} = \Sig^{\ga}
\eeq
Consequently we get (\ref{eq: MkTransf}) by combining (\ref{eq: mk2Mk}) and (\ref{eq: prf03}). 
\end{proof}

\indent Now we are ready to express the high order moments and central moments of a general Gaussian vector in terms of its mean vector and covariance matrix. 

\begin{thm}\label{th: Moment4Gaussvec}
Let $\bx\in \R^n$ be a Gaussian vector with $\bx\sim N_n(\mu, \Sig)$ where $\mu\in \R^n$ and $\Sig\in \R^{n\times n}$ is a positive semidefinite matrix. Then
\beq\label{eq: moment4gvec}
m_k[\bx] = \sum\limits_{s=0}^{\lfix{k/2}}\sum\limits_{\bar{\ga}\in \Pi(s,k)} (\Sig\times_{\ga_1} \Sig\times_{\ga_2}\Sig\times\ldots \times_{\ga_s}\Sig\times_{\ga_{s+1}}\mu^{k-2s}) 
\eeq  
\end{thm}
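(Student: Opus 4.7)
The plan is to reduce to the standard normal case handled by Lemma \ref{le: mk_SND2ND} via the affine structure of Gaussian vectors. Since $\Sig$ is PSD, first write $\Sig=AA^{\top}$ for some $A\in\R^{n\times p}$ and use the representation $\bx\overset{d}{=}\mu+A\bu$ with $\bu\sim N_p(0,I_p)$. Then $m_k[\bx]=E[(\mu+A\bu)^{\times k}]$.

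Next I would expand the outer power distributively over the $k$ tensor slots. For each subset $W\subseteq[k]$, let $T_W$ denote the outer product of $k$ rank-one factors that places $\mu$ at the mode positions in $W$ and places $A\bu$ at the mode positions in $W^{c}:=[k]\setminus W$. Then
\[
(\mu+A\bu)^{\times k}=\sum_{W\subseteq[k]} T_W,
\]
and by linearity of expectation $m_k[\bx]=\sum_{W\subseteq[k]} E[T_W]$. Since the $\mu$-factors are deterministic, $E[T_W]$ factors as the outer product of $\mu^{k-|W^c|}$ at positions $W$ with the expectation $E[(A\bu)^{\times|W^c|}]$ placed at positions $W^c$. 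By item (1) of Lemma \ref{le: mk_SND2ND}, the terms with $|W^c|$ odd vanish, so only subsets with $|W^c|=2s$ for $0\le s\le\lfix{k/2}$ contribute.

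For such a $W$, item (2) of Lemma \ref{le: mk_SND2ND} expresses $E[(A\bu)^{\times 2s}]$ as $\sum_{\ga} \Sig^{\ga}$, where $\ga$ runs over the 2-partitions of the $2s$-element index set $W^c$. Each pair consisting of a subset $W\subseteq[k]$ with $|W|=k-2s$ together with a 2-partition $\{\ga_1,\ldots,\ga_s\}$ of $W^c$ is, by the definition preceding the theorem, precisely a $[s,2]$-partition $\bar{\ga}=\{\ga_1,\ldots,\ga_s,\ga_{s+1}\}\in\Pi(s,k)$ with $\ga_{s+1}=W$. The summand corresponding to $\bar{\ga}$ is exactly
\[
\Sig\times_{\ga_1}\Sig\times_{\ga_2}\Sig\times\ldots\times_{\ga_s}\Sig\times_{\ga_{s+1}}\mu^{k-2s},
\]
and summing over $0\le s\le\lfix{k/2}$ and $\bar{\ga}\in\Pi(s,k)$ yields (\ref{eq: moment4gvec}).

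The main obstacle is the careful bookkeeping of tensor modes during the outer-product expansion: because the outer product is not commutative with respect to mode labels, one must track precisely which of the $k$ slots receives a $\mu$-copy and which receives an $A\bu$-copy in each summand $T_W$, so that the subsequent application of Lemma \ref{le: mk_SND2ND} yields 2-partitions indexed by the \emph{correct} subset $W^c$ of slots rather than an abstract copy of $[2s]$. Once that is handled, the identification of the resulting double sum with the family of $[s,2]$-partitions falls straight out of the definition of $\Pi(s,k)$.
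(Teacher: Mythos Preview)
Your proposal is correct and follows essentially the same route as the paper: represent $\bx\overset{d}{=}\mu+A\bu$, expand the outer power $(\mu+A\bu)^{\times k}$ over choices of slot positions, discard odd contributions and evaluate the even ones via Lemma~\ref{le: mk_SND2ND}, then identify each pair (subset of $\mu$-slots, 2-partition of its complement) with an element of $\Pi(s,k)$. The only cosmetic difference is that the paper takes $A$ to be the symmetric square root of $\Sig$ whereas you allow a general $A\in\R^{n\times p}$ with $AA^{\top}=\Sig$; your observation about tracking mode labels so that the 2-partitions live on the correct subset $W^{c}\subset[k]$ is a point the paper leaves implicit.
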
 

\begin{proof}
Since $\bx\sim N_n(0, \Sig)$.  We have $\bx\overset{d}{\sim}\mu +A\bu$, where $\bu\sim N_n(0,I_n)$ and $A\in \R^{n\times n}$ is symmetric with $A^2=\Sig$ ($A$ is a square root of $\Sig$). 
Then we have  
\bey\label{eq: prf01}
 m_k[\bx]  &=& E[\bx^k] =E[(\mu +A\bu)^k ] \\
                 &=& E[\sum\limits_{m=0}^k\sum\limits_{\theta\in \Phi(k-m,m)} (A\bu)^m\times_{\theta} \mu^{k-m}]\\
                 &=& \sum\limits_{m=0}^k\sum\limits_{\theta\in \Phi(k-m,m)} (E[A\bu)^m]\times_{\theta} \mu^{k-m}
\eey
where $\Phi(p,m):=\set{\theta:=(\theta_1,\theta_2,\ldots, \theta_p):  1\leq \theta_1<\theta_2<\ldots <\theta_p\leq m }$.  Since $E[(A\bu)^m] =0$ for each odd $m$ by Lemma \ref{le: mk_SND2ND}, 
we have, by the last equality of (\ref{eq: prf01})
\beyy
 m_k[\bx]  &=&  \sum\limits_{s=0}^{\lfix{k/2}}\sum\limits_{\theta\in \Phi(k-2s, 2s)} (E[(A\bu)^{2s}]\times_{\theta} \mu^{k-2s}\\
                 &=& \sum\limits_{s=0}^{\lfix{k/2}}\sum\limits_{\theta\in \Phi(k-2s, 2s)} ([A]E[\bu]^{2s}]\times_{\theta} \mu^{k-2s}\\
                 &=& \sum\limits_{s=0}^{\lfix{k/2}}\sum\limits_{\theta\in \Phi(k-2s, 2s)}  \sum\limits_{\ga\in \pi_{2s}} \Sig^{\ga} \times_{\theta} \mu^{k-2s}
  \eeyy
where $\ga=(\ga_1,\ga_2,\ldots, \ga_s)$ is a 2-partition of $[m]=[2s]$.  Set $\ga_{s+1}:=\theta\in \Phi(k-2s, 2s)$ and denote $\bar{\ga}=(\ga_1,\ga_2,\ldots, \ga_s,\ga_{s+1})$. Then 
$\bar{\ga}$ is a pseudo 2-partition of $[k]$.  (\ref{eq: moment4gvec}) is immediate. The proof is completed. 
\end{proof}

\indent  We end the paper by pointing out that the higher order moments of a Gaussian matrix can also be expressed similarly in tensor form, which may be investigated in our future work.  

%

\end{document}